
\documentclass[12pt,a4paper]{amsart}
\usepackage{}
\usepackage[all]{xy}
\usepackage{amsmath}
\usepackage{amsfonts}
\usepackage{amssymb}
\usepackage[all]{xy}           
\usepackage{mathrsfs}

\usepackage{bbding}
\usepackage{amscd}

\usepackage{ifpdf}
\ifpdf
 \usepackage[colorlinks,final,backref=page,hyperindex]{hyperref}
\else
  \usepackage[colorlinks,final,backref=page,hyperindex,hypertex]{hyperref}
\fi

\usepackage{tikz}
\usepackage[active]{srcltx}
\usepackage[dvips]{lscape}
\xyoption{web}
\usetikzlibrary{arrows}
\usepackage{xspace,amsthm}
\usepackage[parfill]{parskip}
\usepackage{mathrsfs}
%

\makeatletter

\def\C{\mathbb{C}}

\usepackage[active]{srcltx}
\usepackage{enumerate,amsthm,amssymb,latexsym}
\usepackage[all]{xy}
\usepackage[parfill]{parskip}

\usepackage{amsmath,amssymb,xspace,amsthm}
\newtheorem{theorem}{Theorem}
\newtheorem{remark}[theorem]{Remark}
\newtheorem{lemma}[theorem]{Lemma}
\newtheorem{proposition}[theorem]{Proposition}
\numberwithin{equation}{section}

\usepackage{multirow}
\usepackage{marginnote}

\def\span{\operatorname{span}}

\renewcommand{\l}{\ensuremath{\lambda}}

\newcommand{\Z}{\ensuremath{\mathbb{Z}}\xspace}

\newcommand{\supp}{\ensuremath{\operatorname{Supp}}\xspace}

\newcommand{\ad}{\operatorname{ad}\xspace}

\renewcommand{\phi}{\varphi}

\renewcommand{\leq}{\leqslant}
\renewcommand{\geq}{\geqslant}
\def\md{\mathfrak{D}}
\def\ms{\mathfrak{s}}
\def\mg{\mathfrak{g}}
\def\mh{\mathfrak{h}}

\def\sl{\mathfrak{sl}}
\def\so{\mathfrak{so}}
\def\gl{\mathfrak{gl}}

\def\l{\lambda}

\def\ad{\text{ad}}
\def\span{\text{span}}

\def\C{\Bbb C}
\def\Z{\Bbb Z}

\def\z{\dot z}

\begin{document}
\title [The Schr{\"o}dinger Lie algebra  in $(n+1)$-dimensional space-time]
{Irreducible weight modules over the Schr{\"o}dinger Lie algebra  in $(n+1)$ dimensional space-time}
\author{Genqiang Liu, Yang Li, Keke Wang}
\maketitle

\begin{abstract} In this paper, we study weight representations over the Schr{\"o}dinger Lie algebra $\mathfrak{s}_n$ for any positive integer $n$.
It turns out that the algebra $\mathfrak{s}_n$ can be realized by polynomial differential operators. Using this realization, we give a complete classification of
irreducible weight  $\mathfrak{s}_n$-modules with finite dimensional weight spaces for any $n$. All such modules can be clearly characterized by the tensor product of $\mathfrak{so}_n$-modules, $\mathfrak{sl}_2$-modules and modules over the Weyl algebra.
\end{abstract}

 \noindent {\em Keywords}: Schr{\"o}dinger Lie algebra, weight module, differential operator, irreducible module.

 \noindent {\em 2010 Math. Subj. Class.:}
17B10, 17B37, 17B81, 20G42.

\section{Introduction}

The Schr{\"o}dinger group  is the symmetry group of the free particle Schr{\"o}dinger equation, see \cite{DDM1,Pe}.
Its corresponding Lie algebra is the Schr{\"o}dinger algebra which plays an important role in mathematical physics
and its applications.  It is not semi-simple.
Recently there were a series of papers on studying
the representation theory of the Schr{\"o}dinger algebra
$\ms_1$ in the case of $(1+1)$-dimensional space-time, see \cite{AD,CCS,DDM1,D,DLMZ,WZ,ZC}.
In particular, all simple weight modules over $\ms_1$ with  finite dimensional weight spaces were
classified in \cite{D}, see also \cite{LMZ}.    All simple weight modules with infinite dimensional weight spaces over $\ms_1$ were
classified in \cite{BL1,BL2}.  The BGG category
$\mathcal{O}$ of $\ms_1$  was studied in \cite{DLMZ}. In the present paper, we will study
 weight  modules with finite dimensional weight spaces over $\ms_n$ in $(n+1)$-dimensional
  space-time,  for any positive integer $n$.

The paper is organized as follows. In Section 2, we give the related definitions and notations.
In Section 3, we firstly study the structure of the universal enveloping algebra $U(\ms_n)$
using differential operators, see Proposition \ref{proposition3}.
Then we give a  crystal clear description of all highest weight modules, see Proposition \ref{GHM}. The classification of Harish-Chandra modules was divided into two situations: $\z=0$ and $\z\neq 0$, according to the action
of the central element $z$. Let $M$ be an irreducible  Harish-Chandra
$\ms_n$-module on which  $z$ acts as the  scalar $\z$.
If $\z=0$, then $\mh_nM=0$ and  $M\cong V\otimes L$ for some
 finite dimensional irreducible $\so_n$-module $V$ and some  irreducible weight $\sl_2$-module $L$, see Theorem \ref{p14}.
If $\z\neq 0$ and $n>1$, then the module $M$ is a highest weight module or a lowest weight module,  isomorphic or equivalent  to $  V\otimes_{\z} L_{\sl_2}(\lambda) \otimes _{\dot z} \C[t_1,\dots,t_n]$ up to some involution  of $\ms_n$ for some finite dimensional  irreducible
$\so_{n}$-module $V$ and some irreducible highest weight module $L_{\sl_2}(\lambda)$ over $\sl_2$, see (a), (b) in Theorem \ref{th9}.  In the case that $n=1$, in addition to highest and lowest weight modules, there is a third class of modules $L_{\sl_2}(k)\otimes_{\z} t_1^\lambda\C[t_1^{\pm 1}]$, $k\in\Z_+,\lambda\not\in\Z$, see (c) in Theorem \ref{th9}. It should be noted that the classification for $n=1$ was also given in \cite{D} by a different method.

Throughout  this paper, we denote by $\mathbb{Z}$, $\mathbb{Z}_+$, $\mathbb{N}$,
$\mathbb{C}$ and $\mathbb{C}^*$ the sets of  all integers, nonnegative integers,
positive integers, complex numbers, and nonzero complex numbers, respectively.
For any Lie algebra $\mg$, we denote its universal enveloping algebra by $U(\mg)$.


\section{Definitions and notations}

\subsection{The Schr{\"o}dinger algebra  $\ms_n$ in $n+1$-dimensional space-time}

Fix a positive integer $n$.
In this subsection, we first recall the definition of $\ms_n$ from \cite{DDM1} in a different form. 
We know that  the general linear Lie algebra $\mathfrak{gl}_{2n}$  has  the natural representation on $\C^{2n}$ by
left matrix  multiplication. Let $\{e_1, e_2, \cdots, e_{2n}\}$ be  the standard basis of $\C^{2n}$.

The Heisenberg Lie algebra $\mh_{n}=\C^{2n}\oplus \C z$ is the Lie algebra with
Lie bracket   given by
$$[e_i, e_{n+i}]=z, \qquad [z, \mh_n]=0.$$

Recall that the Schr{\"o}dinger Lie   algebra $\ms_n$ is  the  semidirect product Lie algebra
$$\ms_n=(\sl_2\oplus \so_n )\ltimes \mh_n,$$
where $\sl_2$ is embedded in $\gl_{2n}$
by the mapping
$$\left(
    \begin{array}{cc}
      a & b \\
      c & -a \\
    \end{array}
  \right)\mapsto \left(
    \begin{array}{cc}
      a I_n& bI_n \\
      cI_n & -aI_n \\
    \end{array}
  \right)
$$
and $\so_n$ is embedded in $\gl_{2n}$ by
$$A\in \so_n\mapsto \left(
    \begin{array}{cc}
     A& 0 \\
      0 & A \\
    \end{array}
  \right). $$

  Here $I_n$ is the $n\times n$ identity matrix, $\sl_2\oplus \so_n$ acts
   on $\mh_n$ by matrix multiplication, and $[z,\ms_n]=0$ .

Next, we will introduce a basis of $\ms_n$. Let $$\aligned h&=\left(
    \begin{array}{cc}
       I_n& \\
       & -I_n \\
    \end{array}
  \right),
 e=\left(
    \begin{array}{cc}
      0& I_n \\
      0& 0\\
    \end{array}
  \right),
  f=\left(
    \begin{array}{cc}
      0& 0\\
      I_n & 0\\
    \end{array}
  \right),\\
s_{ij}&=\left(
    \begin{array}{cc}
     e_{ij}-e_{ji}& 0 \\
      0 &  e_{ij}-e_{ji} \\
    \end{array}  \right),1\leq i<j\leq n,\\
    x_k&=e_k, y_k=e_{n+k}, 1\leq k\leq n, \endaligned $$where $e_{i,j}(1\leq i,j\leq n)$ the $n\times n$ matrix with zeros everywhere except a $1$ on position $(i,j)$.

Note that $s_{ij}=-s_{ji}$. We can check that the  non-trivial commutation relations of $\ms_n$ are
$$\aligned & [h,e]=2e, [h,f]=-2f, [e,f]=h,\\
& [x_i, y_i]=z, [h,x_i]=x_i, [h,y_i]=-y_i,\\
& [e,y_i]=x_i, [f,x_i]=y_i,\\
&[s_{kl},  x_i]=\delta_{li} x_k-\delta_{ki}x_l, [s_{kl},  y_i]=\delta_{li} y_k-\delta_{ki}y_l, \\
& [s_{ij},s_{kl}]=\delta_{kj}s_{il}+\delta_{il}s_{jk}+\delta_{lj}s_{ki}+\delta_{ki}s_{lj}.
\endaligned$$

Let $$\aligned \ms_n^+& =\span_\C \{ e,  x_i\mid i=1,\dots,n\}, \\
\ms_n^-&=\span_\C \{ f, y_i\mid i=1,\dots,n\},\\
\ms_n^0&=\so_n\oplus \C h\oplus \C z.\endaligned$$

Then we have the following triangular decomposition: $$\ms_n=\ms_n^-\oplus \ms_n^0\oplus \ms_n^+.$$

\subsection{Weight modules}
 An $\ms_n$-module  $M$ is called a {\it weight module} if $h$ acts diagonally on  $M$, i.e.
$$ M=\oplus_{\lambda\in \C } M_\lambda,$$
where $M_\lambda:=\{v\in M \mid hv=\lambda v\}.$   Denote $$\mathrm{Supp}(M):=\{\lambda\in\C  \mid M_\lambda\neq0\}.$$

A weight module is called a {\it Harish-Chandra module} if all its weight spaces are finite dimensional.
For a weight module $M$, a
weight vector $v\in M_\lambda$ is called a {\it highest weight vector} if $\ms_n^{+}v=0$.
 A module is called a {\it highest weight module}
if it is generated by a highest weight vector.

\subsection{Verma modules}
Denote $\mathfrak{b}_n:=\ms_n^0\oplus \ms_n^+$.
For $\lambda\in \C$ , $\z\in\C$ and an irreducible $\so_n$-module $V$,  we can make $V$ to be a
$\mathfrak{b}_n$-module such that
$$ \ms_n^+v=0, hv=\lambda v, z v=\z v, $$
for any $v\in V$. We denote this $\mathfrak{b}_n$-module by $V_{\lambda,\z}$

The {\em Verma module} is defined, as usual, as follows:
\begin{displaymath}
M(V, \lambda,\z):=\mathrm{Ind}_{\mathfrak{b}_n}^{\ms_n}V_{\lambda, \z}\cong
U(\ms_n)\bigotimes_{U(\mathfrak{b}_n)}V_{\lambda, \z}.
\end{displaymath}

The module $M(V, \lambda,\z)$ has the  unique simple quotient module denoted by
$ L(V, \lambda,\z)$. If $\dim V=1$, then we denote $M(V, \lambda,\z)$
 (resp. $L(V, \lambda,\z)$) by  $M(\lambda,\z)$ (resp. $L(\lambda,\z)$).
Similarly we can define the Verma module ${M}_{\sl_{2}}(\lambda)$ and
the irreducible highest weight module ${L}_{\sl_{2}}(\lambda)$ over $\sl_2$  for  $\lambda\in\C$ .

\section{Irreducible weight $\ms_n$-modules}

\subsection{Differential operator realization of $\ms_n$}

 For $n\in\mathbb{N}$,  denote by $\md_{n}$ the {\it Weyl algebra of rank} $n$ over
 the  polynomial algebra $\C[t_1,\cdots,t_n]$. Namely,
$\md_{n}$  is the associative algebra  over $\C$ generated by
$$t_1,\dots,t_n,
\partial_1,\dots,\partial_n$$ subject to
the relations
$$[\partial_i, \partial_j]=[t_i,t_j]=0,\qquad [\partial_i,t_j]=\delta_{i,j},\ 1\leq i,j\leq n.$$
A $\md_n$-module $V$ is a {\it weight module} if all $t_i\partial_i$ are semisimple on $V$.

In the next lemma, we will show that $\md_{n}$ is a quotient algebra of $U(\ms_n)$.

\begin{lemma}\label{lemma3}For any nonzero scalar $\z$, we have the
    associative algebra homomorphism
    $$ \theta_{\z}: \,\, U(\ms_n)/\langle z-\z\rangle\rightarrow  \md_n$$
    defined by
$$\aligned
          & x_i \mapsto  \tilde{x}_i:=\sqrt{\z}\partial_i,\\
          & y_i\mapsto \tilde{y}_i :=\sqrt{\z} t_i,\\
          & e\mapsto \tilde{e}:=\frac{1}{2}  \sum_{k=1}^n \partial_k^2,\\
          & f\mapsto \tilde{f}:=-\frac{1}{2} \sum_{k=1}^n t_k^2,\\
          &h\mapsto \tilde{h}:=-\frac{1}{2} \sum_{k=1}^n (\partial_kt_k+t_k\partial_k),\\
          & s_{ij}\mapsto \tilde{s}_{ij}:= t_i\partial_j-t_j\partial_i,
           \endaligned
$$ where $1\leq i, j \leq n$.
\end{lemma}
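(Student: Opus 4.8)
The plan is to invoke the universal property of the universal enveloping algebra. Recall that for any Lie algebra $\mg$ and any associative algebra $A$, a Lie algebra homomorphism $\mg\to A^{(-)}$ — where $A^{(-)}$ denotes $A$ equipped with the commutator bracket — extends uniquely to an associative algebra homomorphism $U(\mg)\to A$. So it suffices to produce a \emph{linear} map $\theta\colon \ms_n\to \md_n$ on the fixed basis, given by the six displayed formulas together with $z\mapsto\z$, and to verify that it respects the Lie bracket on every pair of basis vectors. No associativity or well-definedness issue then remains: the whole content of the lemma reduces to a finite list of commutator identities in the Weyl algebra.

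First I would record that, because the relations displayed in Section~2 furnish a complete set of structure constants for $\ms_n$ in this basis, $\theta$ is a Lie homomorphism precisely when the images $\tilde h,\tilde e,\tilde f,\tilde s_{ij},\tilde x_k,\tilde y_k$ satisfy, inside $\md_n$, each listed relation (with $z$ replaced by $\z$), together with the vanishing of the unlisted brackets. The latter are immediate: for instance $[\tilde e,\tilde x_i]=[\tfrac12\sum_k\partial_k^2,\sqrt{\z}\,\partial_i]=0$ since the $\partial_k$ commute, and $\theta(z)=\z$ is central, matching $[z,\ms_n]=0$.

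Next I would carry out the nontrivial checks. The relations $[\partial_i,t_j]=\delta_{ij}$ give, in a single variable, $[\partial^2,t^2]=2(\partial t+t\partial)$, $[t\partial,\partial^2]=-2\partial^2$, $[\partial^2,t]=2\partial$ and $[t^2,\partial]=-2t$; summing over the index (and using the convenient rewriting $\tilde h=-\sum_k t_k\partial_k-\tfrac{n}{2}$) reproduces the $\sl_2$-triple relations $[\tilde h,\tilde e]=2\tilde e$, $[\tilde h,\tilde f]=-2\tilde f$, $[\tilde e,\tilde f]=\tilde h$. The mixed relations $[\tilde h,\tilde x_i]=\tilde x_i$, $[\tilde h,\tilde y_i]=-\tilde y_i$, $[\tilde e,\tilde y_i]=\tilde x_i$, $[\tilde f,\tilde x_i]=\tilde y_i$ follow the same way (here $\sqrt{\z}$ appears linearly and cancels consistently on both sides). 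Finally, $\tilde s_{ij}=t_i\partial_j-t_j\partial_i$ is the standard angular-momentum realization of $\so_n$, so the $\so_n$-relations and the actions $[\tilde s_{kl},\tilde x_i]=\delta_{li}\tilde x_k-\delta_{ki}\tilde x_l$, $[\tilde s_{kl},\tilde y_i]=\delta_{li}\tilde y_k-\delta_{ki}\tilde y_l$ are routine.

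The one relation that pins down the normalization is the Heisenberg bracket: $[\tilde x_i,\tilde y_i]=[\sqrt{\z}\,\partial_i,\sqrt{\z}\,t_i]=\z\,[\partial_i,t_i]=\z=\theta(z)$, while $[\tilde x_i,\tilde y_j]=0$ for $i\neq j$. This is the real point of the factor $\sqrt{\z}$: it is exactly what makes $[x_i,y_i]=z$ compatible with $z\mapsto\z$, and it is the only place where $\z$ enters quadratically. Once every relation is verified, $\theta$ extends to an associative homomorphism $\hat\theta\colon U(\ms_n)\to\md_n$; since $\hat\theta(z-\z)=0$, the two-sided ideal $\langle z-\z\rangle$ lies in $\ker\hat\theta$, so $\hat\theta$ descends to the desired $\theta_{\z}$ on $U(\ms_n)/\langle z-\z\rangle$. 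I expect no genuine obstacle beyond bookkeeping; the only thing to watch is the consistency of the $\sqrt{\z}$ scaling across the Heisenberg and $\sl_2$ relations, which the computation above confirms.
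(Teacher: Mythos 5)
Your proposal is correct and follows essentially the same route as the paper: verify the defining commutation relations of $\ms_n$ on the images $\tilde h,\tilde e,\tilde f,\tilde s_{ij},\tilde x_k,\tilde y_k$ inside $\md_n$, then extend by the universal property of $U(\ms_n)$ and descend modulo $\langle z-\z\rangle$. You are slightly more explicit than the paper about the universal-property step, the vanishing of the unlisted brackets, and the relations $[\tilde h,\tilde x_i]=\tilde x_i$, $[\tilde h,\tilde y_i]=-\tilde y_i$, but the substance is identical.
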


\begin{proof}
For any $i\in\{1,\dots,n\}$, we have that $[\tilde{x}_i, \tilde{y}_i]=\z,$

 $$\aligned  &  [\tilde{e}, \tilde{y}_i]=[\frac{1}{2} \sum_{k=1}^n \partial_k^2, \sqrt{\z} t_i]\\
 &=[\frac{1}{2} \partial_i^2, \sqrt{\z} t_i]=\sqrt{\z}\partial_i=\tilde{x}_i,\endaligned$$

 and $$\aligned & [\tilde{f},\tilde{x}_i]=[-\frac{1}{2} \sum_{k=1}^n t_k^2,\sqrt{\z}\partial_i]\\
  &=[-\frac{1}{2}  t_i^2,\sqrt{\z}\partial_i]=\sqrt{\z}t_i=\tilde{y}_i.\endaligned$$

 Furthermore, we can obtain that
  $$\aligned \ [\tilde{e},\tilde{f}]& =[\frac{1}{2}  \sum_{k=1}^n \partial_k^2,-\frac{1}{2} \sum_{k=1}^n t_k^2]
 =\frac{-1}{4}  \sum_{k=1}^n [\partial_k^2, t_k^2]\\
 &=\frac{-1}{2} \sum_{k=1}^n (\partial_kt_k+t_k\partial_k)=\tilde{h},\\
 \ [\tilde{h}, \tilde{e}]&=[-\frac{1}{2} \sum_{k=1}^n (\partial_kt_k+t_k\partial_k),\frac{1}{2}  \sum_{k=1}^n \partial_k^2]\\
 & =-\sum_{k=1}^n\frac{1}{4}[(\partial_kt_k+t_k\partial_k), \partial_k^2]\\
 &=\sum_{k=1}^n\partial_k^2=2\tilde{e},\endaligned$$

and   $$\aligned \ [\tilde{h}, \tilde{f}]&=[-\frac{1}{2} \sum_{k=1}^n (\partial_kt_k+t_k\partial_k),-\frac{1}{2}  \sum_{k=1}^n t_k^2]\\
 & =\sum_{k=1}^n\frac{1}{4}[(\partial_kt_k+t_k\partial_k), t_k^2]\\
 &=\sum_{k=1}^nt_k^2=-2\tilde{f}.\endaligned$$

 Finally, for any $k,l,i,j\in\{1,\dots,n\}$, we get that

 $$\aligned \ [\tilde{s}_{kl}, \tilde{x}_i]&=[t_k\partial_l-t_l\partial_k,\sqrt{\z}\partial_i]\\
 &= -\sqrt{\z}\delta_{ki}\partial_l+\sqrt{\z}\delta_{li}\partial_k\\
 &= -\delta_{ki}\tilde{x}_l+\delta_{li}\tilde{x}_k,\endaligned$$

 $$\aligned \ [\tilde{s}_{kl}, \tilde{y}_i]&=[t_k\partial_l-t_l\partial_k,\sqrt{\z}t_i]\\
 &= -\sqrt{\z}\delta_{ki}t_l+\sqrt{\z}\delta_{li}t_k\\
 &= -\delta_{ki}\tilde{y}_l+\delta_{li}\tilde{y}_k,\endaligned$$

 and $$\aligned \ [\tilde{s}_{ij},\tilde{s}_{kl}]
 =\delta_{kj}\tilde{s}_{il}+\delta_{il}\tilde{s}_{jk}+\delta_{lj}\tilde{s}_{ki}+\delta_{ki}\tilde{s}_{lj}.\endaligned$$

 Therefore,  the map $\theta_{\z}$ can define an algebra homomorphism.
\end{proof}

\begin{remark} Via the homomorphism $\theta_{\z}$ in Lemma \ref{lemma3}, any $\md_n$-module $M$
can be viewed as an $\ms_n$-module by $x \cdot v=\theta_{\z}(x)v$, for $ x\in\ms_n, v\in M$.  In particular, the
module $\C[t_1,\cdots,t_n]$ is isomorphic to the irreducible highest weight module $L(-\frac{n}{2}, \z)$
\end{remark}
Using Lemma \ref{lemma3}, we have the following algebra isomorphism.

 \begin{proposition}\label{proposition3}For any nonzero scalar $\z$, we have the
    associative algebra isomorphism
    $$ \phi_{\z}: \,\, U(\ms_n)/\langle z-\z\rangle\rightarrow U(\so_n\oplus \sl_2)\otimes \md_n$$
    defined by
$$\aligned
          & x_i \mapsto 1\otimes \tilde{x}_i,\\
          & y_i\mapsto1\otimes \tilde{y}_i,\\
          & e\mapsto e\otimes 1+1\otimes  \tilde{e},\\
          & f\mapsto f\otimes 1+1\otimes  \tilde{f},\\
          &h\mapsto h\otimes 1+1\otimes \tilde{h},\\
          & s_{ij}\mapsto s_{ij}\otimes 1+ 1\otimes \tilde{s}_{ij},
           \endaligned
$$ where $1\leq i, j \leq n$.
 \end{proposition}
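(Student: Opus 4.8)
The plan is to verify that $\phi_{\z}$ is a well-defined algebra homomorphism, then prove surjectivity directly, and finally obtain injectivity by passing to associated graded algebras.

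\emph{Homomorphism.} I would first check that the assignment on the generators $x_i,y_i,e,f,h,s_{ij}$ of $\ms_n$, together with $z\mapsto\z$, extends to a Lie algebra homomorphism $\ms_n\to U(\so_n\oplus\sl_2)\otimes\md_n$ (commutator bracket); this then extends to $U(\ms_n)$ and factors through $U(\ms_n)/\langle z-\z\rangle$ since $z-\z\mapsto 0$. The verification splits by the types of the two generators. For two Levi generators $a,b\in\{e,f,h,s_{ij}\}$, writing $\phi_\z(a)=a\otimes 1+1\otimes\tilde a$ and using that $A\otimes 1$ commutes with $1\otimes B$, one gets $[\phi_\z(a),\phi_\z(b)]=[a,b]\otimes 1+1\otimes[\tilde a,\tilde b]$; since $a\mapsto\tilde a$ is the restriction of the homomorphism $\theta_\z$ of Lemma \ref{lemma3}, we have $[\tilde a,\tilde b]=\widetilde{[a,b]}$, so this equals $\phi_\z([a,b])$ (in particular $[\sl_2,\so_n]=0$ is respected, both summands vanishing). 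For a Levi generator $a$ and a Heisenberg generator $b\in\{x_i,y_i\}$ only the cross term survives: $[\phi_\z(a),\phi_\z(b)]=1\otimes[\tilde a,\tilde b]=1\otimes\widetilde{[a,b]}=\phi_\z([a,b])$, again by Lemma \ref{lemma3}. For two Heisenberg generators the same computation gives $[\phi_\z(x_i),\phi_\z(y_j)]=1\otimes[\tilde x_i,\tilde y_j]=\delta_{ij}\z=\phi_\z([x_i,y_j])$. Thus the whole step reduces to Lemma \ref{lemma3} together with the fact that the two tensor factors commute.

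\emph{Surjectivity.} Since $\z\neq 0$, the image contains $\tfrac{1}{\sqrt{\z}}\phi_\z(x_i)=1\otimes\partial_i$ and $\tfrac{1}{\sqrt{\z}}\phi_\z(y_i)=1\otimes t_i$, hence all of $1\otimes\md_n$, because $t_i,\partial_i$ generate $\md_n$. Consequently the elements $1\otimes\tilde e,\,1\otimes\tilde f,\,1\otimes\tilde h,\,1\otimes\tilde s_{ij}$ lie in the image, and subtracting them from $\phi_\z(e),\phi_\z(f),\phi_\z(h),\phi_\z(s_{ij})$ shows that $e\otimes 1,f\otimes 1,h\otimes 1,s_{ij}\otimes 1$ are in the image; these generate $U(\so_n\oplus\sl_2)\otimes 1$. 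As $U(\so_n\oplus\sl_2)\otimes 1$ and $1\otimes\md_n$ generate the whole tensor product, $\phi_\z$ is surjective.

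\emph{Injectivity.} I would filter the source so that $x_i,y_i$ have degree $1$ and $e,f,h,s_{ij}$ have degree $2$. Every commutator of two generators has degree strictly less than the sum of their degrees, so $\operatorname{gr}\bigl(U(\ms_n)/\langle z-\z\rangle\bigr)$ is the commutative polynomial algebra on the symbols of $x_i,y_i,e,f,h,s_{ij}$, of Krull dimension $2n+3+\binom{n}{2}$. On the target I would take the tensor product of the PBW filtration on $U(\so_n\oplus\sl_2)$ (with $e,f,h,s_{ij}$ in degree $2$) and the Bernstein filtration on $\md_n$ (with $t_i,\partial_i$ in degree $1$); its associated graded is the polynomial algebra $S(\so_n\oplus\sl_2)\otimes\C[t_i,\partial_i]$ of the same Krull dimension. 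With these choices $\phi_\z$ is filtered, and $\operatorname{gr}(\phi_\z)$ sends the symbol of $x_i$ (resp. $y_i$) to $\sqrt{\z}\,\partial_i$ (resp. $\sqrt{\z}\,t_i$) and the symbol of each Levi generator to its counterpart in $S(\so_n\oplus\sl_2)$ plus a polynomial in $t_i,\partial_i$. This is a triangular substitution, hence $\operatorname{gr}(\phi_\z)$ is surjective; being a surjective homomorphism of polynomial algebras of equal finite Krull dimension over a domain, it is injective, so an isomorphism. Since the filtrations are exhaustive, $\phi_\z$ is then an isomorphism as well.

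\emph{Main obstacle.} The only genuine subtlety is the choice of filtration: the naive one assigning degree $1$ to all generators fails, because the top-degree part of $\phi_\z(e)$ coincides with that of $\tfrac{1}{2\z}\sum_k\phi_\z(x_k)^2$, so $\operatorname{gr}(\phi_\z)$ would acquire a kernel. Placing the Levi generators in degree $2$ on both sides is exactly what makes the leading terms of $\phi_\z(e),\phi_\z(f),\phi_\z(h),\phi_\z(s_{ij})$ retain their $U(\so_n\oplus\sl_2)$-components, turning $\operatorname{gr}(\phi_\z)$ into an invertible triangular change of variables. An alternative to the graded argument is to exhibit the inverse directly, sending $t_i,\partial_i$ to $\tfrac{1}{\sqrt{\z}}y_i,\tfrac{1}{\sqrt{\z}}x_i$ and each $a\otimes 1$ to $a$ minus its quadratic Heisenberg correction (for instance $e\otimes 1\mapsto e-\tfrac{1}{2\z}\sum_k x_k^2$ and $f\otimes 1\mapsto f+\tfrac{1}{2\z}\sum_k y_k^2$), and to check that the two composites are the identity on generators.
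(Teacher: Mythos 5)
Your proposal is correct, and it reaches the result by a genuinely different route at the injectivity step. The homomorphism and surjectivity parts coincide in substance with the paper's: the paper observes that $\phi_{\z}=(\pi\otimes\theta_{\z})\circ\Delta$, where $\Delta(u)=u\otimes 1+1\otimes u$ is the comultiplication and $\pi$ is the quotient onto $U(\so_n\oplus\sl_2)$, which packages exactly the case-by-case bracket checks you reduce to Lemma \ref{lemma3}; and your surjectivity argument (recover $1\otimes\md_n$ from the images of $x_i,y_i$, then peel off $1\otimes\tilde{x}$ from $\phi_{\z}(x)$ to land the Levi generators) is literally the paper's. For injectivity the paper applies $\phi_{\z}$ to a PBW monomial $s_{ij}^{r_{ij}}f^mh^{q}e^ly_k^{r_k}x_{d}^{s_d}$, notes that the image has leading term $s_{ij}^{r_{ij}}f^mh^{q}e^l\otimes \tilde{y}_k^{r_k}\tilde{x}_{d}^{s_d}$ with respect to a lexicographic order, and concludes that linearly independent elements go to linearly independent elements --- an elementary leading-term argument. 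You instead pass to associated graded algebras for a weighted filtration (Heisenberg generators in degree $1$, Levi generators in degree $2$) and finish by comparing Krull dimensions; your ``main obstacle'' remark correctly identifies why the naive filtration fails, namely the same phenomenon that makes $e-\tfrac{1}{2\z}\sum_k x_k^2$ commute with the Heisenberg part. Both arguments are sound, and they rest on the same input: the one step you assert without proof --- that $\operatorname{gr}\bigl(U(\ms_n)/\langle z-\z\rangle\bigr)$ for your weighted filtration is the free polynomial algebra on the symbols --- is exactly where the PBW theorem enters, just as it does (more directly) in the paper. What your version buys is robustness: the paper's phrase ``using the lexicographic order of the PBW basis'' leaves the bookkeeping of the intermediate terms implicit, whereas your graded argument, and even more so your alternative of exhibiting the explicit inverse ($e\otimes 1\mapsto e-\tfrac{1}{2\z}\sum_k x_k^2$, $f\otimes 1\mapsto f+\tfrac{1}{2\z}\sum_k y_k^2$, $t_i\mapsto \tfrac{1}{\sqrt{\z}}y_i$, $\partial_i\mapsto \tfrac{1}{\sqrt{\z}}x_i$), makes the bijectivity completely transparent.
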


\begin{proof} Let $\pi$ denote the canonical homomorphism
$$U(\ms_n)/\langle z-\z\rangle \rightarrow U(\so_n\oplus \sl_2)=U(\ms_n)/\langle \mh_n\rangle.$$
Moreover, we use $\Delta$ to denote  the usual co-multiplication
$$U(\ms_n)/\langle z-\z\rangle\rightarrow U(\ms_n)/\langle z-\z\rangle\otimes U(\ms_n)/\langle z-\z\rangle,$$
defined by $$ \Delta(u)=u\otimes 1+ 1\otimes u,$$ for any $u\in U(\ms_n)/\langle z-\z\rangle$.
Since  $\phi_{\z}$ is equal to the composition of $\pi\otimes \theta_{\z}$ and $\Delta$,
the map $\phi_{\z}$ is an algebra homomorphism.

For $r_{ij}, m, q, l,r_k, s_d\in\Z_+$, we have  $$ \phi_{\z}(s_{ij}^{r_{i,j}}f^mh^{q}e^ly_k^{r_k}x_{d}^{s_d})
=s_{ij}^{r_{i,j}}f^mh^{q}e^l\otimes \tilde{y}_k^{r_k}\tilde{x}_{d}^{s_d} +\cdots+1\otimes \tilde{s}_{ij}^{r_{i,j}}\tilde{f}^m\tilde{h}^{q}\tilde{e}^l\tilde{y}_k^{r_k}\tilde{x}_{d}^{s_d}.$$
Then using the  lexicographic order of the  PBW basis, $\phi_{\z}$ maps
linearly independent elements to linearly independent elements, and hence $\phi_{\z}$ is injective. From
$\phi_{\z}(U(\mh_n)/\langle z-\z\rangle)=1\otimes \md_n$ and $ x\otimes 1=\phi_{\z}(x)-1\otimes \tilde{x}\in Im(\phi_{\z})$ for
 any $x\in\{e,h,f,s_{ij}\mid i,j=1,\dots,n\}$,  we can see that $\phi_{\z}$ is surjective. Therefore  $\phi_{\z}$ is an
isomorphism.
\end{proof}

\begin{remark} We know that 
$U(\so_n\oplus \sl_2)\cong U(\so_n)\otimes U(\sl_2)$.  For any $\so_n$-module $V$, $\sl_2$-module $N$ and $\md_n$-module $L$, through the algebra isomorphism $\phi_{\z}$,
the tensor product $V\otimes N \otimes L$ becomes an $\ms_n$-module, which is denoted by  $V\otimes_{\z} N \otimes_{\z} L$.
\end{remark}
\subsection{The structure of highest weight modules  over $\ms_n$}

In this subsection, we will use the isomorphism $\phi_{\z}$ in Proposition \ref{proposition3} to discuss highest weight modules  over $\ms_n$.

\begin{proposition}\label{GHM}
Let $V$ be an irreducible $\so_n$-module, $\lambda\in\C, \dot z\in\C$.
 \begin{enumerate}[$($a$)$]
\item If $\z=0$, then $\mh_nL(V, \lambda,\z)=0$.
\item If $\z\neq 0$, then we have that
\begin{equation}\label{3.1}M(V,\lambda,\z)\cong  V \otimes_{\z} {M}_{\sl_{2}}(\lambda+\frac{n}{2})\otimes_{\z} \C[t_1,\dots,t_n], \end{equation}
and \begin{equation}\label{3.2} L(V, \lambda,\z)\cong V\otimes_{\z}  {L}_{\sl_2}(\lambda+\frac{n}{2})\otimes _{\z}\C[t_1,\dots,t_n].\end{equation}
\item If $\z\neq 0$, then the Verma module $M(V,\lambda,\z)$ is irreducible if and only if $\lambda+\frac{n}{2}\not\in \Z_+ $.
\end{enumerate}
\end{proposition}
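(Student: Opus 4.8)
The plan is to reduce the irreducibility of $M(V,\lambda,\z)$ to the classical irreducibility criterion for Verma modules over $\sl_2$, exploiting the tensor-product descriptions already obtained in parts (a) and (b). Write $\mu:=\lambda+\frac{n}{2}$. By \eqref{3.1} and \eqref{3.2} we have $M(V,\lambda,\z)\cong V\otimes_\z M_{\sl_2}(\mu)\otimes_\z\C[t_1,\dots,t_n]$ and $L(V,\lambda,\z)\cong V\otimes_\z L_{\sl_2}(\mu)\otimes_\z\C[t_1,\dots,t_n]$, where on both sides the $\ms_n$-action is the one transported through the isomorphism $\phi_\z$ of Proposition \ref{proposition3}; in particular $\so_n$ acts only on $V$, $\sl_2$ acts only on the middle factor, and $\md_n$ acts only on $\C[t_1,\dots,t_n]$. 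The whole argument rests on the standard fact that $M_{\sl_2}(\mu)$ is irreducible exactly when $\mu\notin\Z_+$, in which case $M_{\sl_2}(\mu)=L_{\sl_2}(\mu)$, while for $\mu\in\Z_+$ the canonical map $M_{\sl_2}(\mu)\tto L_{\sl_2}(\mu)$ has a nonzero kernel $K$ (and $L_{\sl_2}(\mu)$ is finite dimensional).

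For the ``if'' direction I would argue as follows. Suppose $\mu\notin\Z_+$. Then $M_{\sl_2}(\mu)=L_{\sl_2}(\mu)$ as $\sl_2$-modules, so the right-hand sides of \eqref{3.1} and \eqref{3.2} coincide and hence $M(V,\lambda,\z)\cong L(V,\lambda,\z)$. Since $L(V,\lambda,\z)$ is simple by construction (it is the unique simple quotient of $M(V,\lambda,\z)$), this forces $M(V,\lambda,\z)$ to be irreducible. Note that this direction needs no separate ``simplicity of a tensor product'' lemma: the needed simplicity is already packaged into the statement of part (b).

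For the converse I would produce an explicit proper submodule. Assume $\mu\in\Z_+$ and let $K\subsetneq M_{\sl_2}(\mu)$ be the nonzero kernel of $M_{\sl_2}(\mu)\tto L_{\sl_2}(\mu)$. Because the $\ms_n$-action factors through $\phi_\z$ into $U(\so_n)\otimes U(\sl_2)\otimes\md_n\cong U(\ms_n)/\langle z-\z\rangle$, with $\sl_2$ preserving $K$, the subspace $V\otimes_\z K\otimes_\z\C[t_1,\dots,t_n]$ is stable under this algebra and is therefore an $\ms_n$-submodule of $M(V,\lambda,\z)$. As $K$ is nonzero and proper and the factors $V$ and $\C[t_1,\dots,t_n]$ are nonzero $\C$-vector spaces, tensoring over $\C$ keeps the inclusion nonzero and proper, so this is a proper nonzero submodule and $M(V,\lambda,\z)$ is reducible. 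Taking the contrapositive yields the equivalence.

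The only genuinely delicate point is the bookkeeping in the converse, namely that a $U(\sl_2)$-submodule of the middle factor really gives rise to an $\ms_n$-submodule of the whole tensor product. This I expect to be immediate from the explicit images $\phi_\z(e),\phi_\z(f),\phi_\z(h),\phi_\z(s_{ij}),\phi_\z(x_i),\phi_\z(y_i)$ recorded in Proposition \ref{proposition3}: each generator acts as a sum of operators each living in a single tensor slot, so a tensor product of submodules (one per slot) is automatically preserved. Everything else is a direct appeal to parts (a), (b) together with the standard reducibility criterion for $\sl_2$-Verma modules, so I do not anticipate any further obstacle.
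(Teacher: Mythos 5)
Your argument covers only part (c) of the proposition: you take the isomorphisms \eqref{3.1} and \eqref{3.2} of part (b) (and, implicitly, the statement of part (a)) as ``already obtained'', but they are themselves part of what is to be proved, and they constitute the bulk of the paper's proof. The paper proves (b) by checking that $v\otimes w_{\lambda+\frac{n}{2}}\otimes 1$ is a highest weight vector of $h$-weight $\lambda$ for the action transported through $\phi_{\z}$ (the shift by $\frac{n}{2}$ comes from $\tilde{h}\cdot 1=-\frac{n}{2}$), invoking the universal property of $M(V,\lambda,\z)$ to obtain a surjection $\psi$ onto $V\otimes_{\z}M_{\sl_2}(\lambda+\frac{n}{2})\otimes_{\z}\C[t_1,\dots,t_n]$, and then proving injectivity of $\psi$ by a PBW leading-term computation; part (a) requires showing that $U(\ms_n)\mh_n(1\otimes V)=U(\ms_n^-)\sum_{j}y_j(1\otimes V)$ is a proper submodule of $M(V,\lambda,0)$, whence $\mh_n$ kills the simple quotient. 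None of this appears in your proposal, so as a proof of the full proposition it has a genuine gap.

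Granting (a) and (b), your treatment of part (c) is correct, and in one respect tidier than the paper's. The paper deduces the equivalence ``$M(V,\lambda,\z)$ irreducible $\iff$ $M_{\sl_2}(\lambda+\frac{n}{2})$ irreducible'' from the simplicity of $V$ and of $\C[t_1,\dots,t_n]$, which in the ``if'' direction silently uses that a tensor product of simple modules over $U(\so_n)\otimes U(\sl_2)\otimes\md_n$ is simple. You avoid that lemma entirely by observing that when $M_{\sl_2}(\mu)=L_{\sl_2}(\mu)$ the right-hand sides of \eqref{3.1} and \eqref{3.2} coincide, so $M(V,\lambda,\z)\cong L(V,\lambda,\z)$ is simple by definition of $L(V,\lambda,\z)$; and your converse, exhibiting $V\otimes_{\z}K\otimes_{\z}\C[t_1,\dots,t_n]$ as a proper nonzero submodule, is sound because every generator acts slotwise under $\phi_{\z}$. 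To turn the proposal into a proof of the proposition you must still supply the arguments for (a) and (b).
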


\begin{proof}
(a) As $\z=0$,  we have that $$x_i y_j (1\otimes V)=y_j x_i(1\otimes V)=0.$$ Then from $$e y_j(1\otimes V)=(x_j+y_je)(1\otimes V)=0,$$ we see that
$$U(\ms_n)\mh_n(1\otimes V)=U(\ms_n)\sum_{j=1}^ny_j(1\otimes V)=U(\ms_n^-)\sum_{j=1}^ny_j(1\otimes V)$$
which is a proper submodule of $M(V,\lambda,\z)$.
Since  the module $L(V, \lambda,\z)$ is the unique irreducible quotient module of  $M(V,\lambda,\z)$,
 we can see  that $\mh_nL(V, \lambda,\z)=0$.

(b)  In the module $ V \otimes_{\z} {M}_{\sl_{2}}(\lambda+\frac{n}{2})\otimes_{\z} \C[t_1,\dots,t_n]$,
we can see that  $$\aligned  h(v\otimes w_{\lambda+\frac{n}{2}}\otimes 1)&=\lambda(v\otimes w_{\lambda+\frac{n}{2}}\otimes 1),\\
 \ms_n^+(v\otimes w_{\lambda+\frac{n}{2}}\otimes 1)&=0,\endaligned $$ 
 where $v\in V$, and $w_{\lambda+\frac{n}{2}}$ is
the highest weight vector of the $\sl_2$-Verma module ${M}_{\sl_{2}}(\lambda+\frac{n}{2})$.

By the universal property of Verma modules, the map
$$1\otimes v\mapsto v\otimes w_{\lambda+\frac{n}{2}}\otimes 1,  v\in V, $$ induces an
$\ms_n$-module homomorphism
 $$\psi: M(V,\lambda,\z)\rightarrow V \otimes_{\z} {M}_{\sl_{2}}(\lambda+\frac{n}{2})\otimes_{\z} \C[t_1,\dots,t_n]$$
which is surjective. For $k,r_1,\cdots,r_n\in\Z_+$,
$$\psi(y_1^{r_1}\cdots y_n^{r_n}f^k(1\otimes v))= \sum_{i=0}^k v\otimes f^i v_{\lambda+\frac{n}{2}}\otimes g_i(t_1,\dots,t_n),$$
where $g_i(t_1,\dots,t_n)$ is a homogenous polynomial of degree $2(k-i)+r_1+\dots+ r_n$.
By PBW theorem, $\psi$ maps linearly independent elements to linearly independent elements, and hence $\psi$ is injective.
Therefore,  $\psi$ is an isomorphism. Then the isomorphisms (\ref{3.1}) and (\ref{3.2}) follow.

(c) By the isomorphism (\ref{3.1}) and the simplicity
 of the $\so_n$-module $V$ and the $\md_n$-module $\C[t_1,\dots,t_n]$, the $\ms_n$-module $M(V,\lambda,\z)$ is irreducible
 if and only if ${M}_{\sl_{2}}(\lambda+\frac{n}{2})$ is irreducible. Then (c) follows.
\end{proof}

\subsection{Classification of irreducible Harish-Chandra modules}

In this subsection, we will classify irreducible Harish-Chandra modules over $\ms_n$. Firstly we will introduce two useful lemmas.

\begin{lemma}\label{lemma6} Let $M$ be an irreducible $\ms_n$-module. For any $x\in \{e,f,x_i,y_i \mid i=1,\dots,n\}$,  we have that
$x$  acts either  injectively on $M$ or locally nilpotently on $M$.
\end{lemma}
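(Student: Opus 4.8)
The plan is to show that for each such $x$ the subspace
$$M_x := \{ v \in M \mid x^N v = 0 \text{ for some } N \in \Z_+ \}$$
of $x$-locally-nilpotent vectors is an $\ms_n$-submodule of $M$. Granting this, irreducibility of $M$ forces $M_x = 0$ or $M_x = M$: in the first case $\ker x \subseteq M_x = 0$, so $x$ acts injectively, and in the second case $x$ acts locally nilpotently by the very definition of $M_x$. First I would record that $M_x$ is a linear subspace, which is immediate since $x^{\max(a,b)}(u+v)=0$ whenever $x^a u = x^b v = 0$ and $M_x$ is clearly closed under scalars.

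The crucial input is that $\ad(x)$ is nilpotent on $\ms_n$ for every $x \in \{e,f,x_i,y_i\}$. I would deduce this from the $\Z$-grading of $\ms_n$ induced by $\ad(h)$: the relations $[h,e]=2e$, $[h,x_i]=x_i$, $[h,f]=-2f$, $[h,y_i]=-y_i$, together with $[h,s_{ij}]=[h,h]=[h,z]=0$, show that $\ms_n=\bigoplus_{d=-2}^{2}\ms_n^{(d)}$, where $\ms_n^{(d)}$ is the $\ad(h)$-eigenspace of eigenvalue $d$. Each of $e,f,x_i,y_i$ is homogeneous of nonzero degree ($2,-2,1,-1$ respectively), so $\ad(x)$ shifts the grading by that nonzero amount; since only five distinct degrees occur, iterating $\ad(x)$ at most five times lands outside the range and gives zero. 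Hence $\ad(x)^{5} = 0$ on $\ms_n$; in particular there is an $m$ (namely $m=5$) with $\ad(x)^m = 0$.

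To promote this adjoint nilpotency to the module, I would use the standard identity in $U(\ms_n)$,
$$ x^N u = \sum_{k=0}^{N} \binom{N}{k}\,(\ad(x)^k u)\, x^{N-k}, \qquad u \in \ms_n, $$
viewed as an equality of operators on $M$. Given $v \in M_x$, say $x^\ell v = 0$, and $u \in \ms_n$, I would choose $N = \ell + m - 1$: for $k \geq m$ the factor $\ad(x)^k u$ vanishes, while for $k < m$ we have $N-k \geq \ell$, so $x^{N-k} v = 0$; thus every summand of $x^N(u v)$ is zero and $u v \in M_x$. Since $\ms_n$ generates $U(\ms_n)$ as an algebra, this shows that $M_x$ is stable under $U(\ms_n)$, i.e. a submodule, which completes the argument.

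The only genuine point requiring care is the submodule property, and it rests entirely on the nilpotency of $\ad(x)$; the bounded $\ad(h)$-grading supplies this uniformly for all four families of elements, so no case-by-case bracket computation beyond reading off the degrees is needed. The main obstacle, such as it is, is simply the bookkeeping in the binomial identity to guarantee that every term of $x^N(uv)$ vanishes for $N$ chosen large enough relative to $\ell$ and $m$.
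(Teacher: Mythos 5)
Your proof is correct and rests on exactly the same key fact as the paper, namely the (local) nilpotency of $\ad x$ on $\ms_n$ (hence on $U(\ms_n)$), from which one shows that the set of locally $x$-nilpotent vectors is a submodule via the identity $x^N u=\sum_k\binom{N}{k}(\ad(x)^k u)x^{N-k}$. The paper simply outsources this standard argument to the reference [DMP], so your write-up is a complete, self-contained version of the very proof being cited.
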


Lemma \ref{lemma6} follows from the fact that $\ad x$ acts locally nilpotently on $U(\ms_n)$, we can refer to \cite{DMP}.

\begin{lemma}  \label{lemma-nil}Let $M$ be any irreducible Harish-Chandra
$\ms_n$-module, on which  $z$ acts as some $\z\in\C$.
 If $e$ (resp. $f$) acts locally nilpotently on $M$, then so does $x_i$ (resp. $y_i$) for any $i\in\{1,\dots,n\}$.
If $\z\neq 0$,  then  the converse is also true.
 \end{lemma}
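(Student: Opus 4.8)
The plan is to establish both implications by exploiting the commutation relations $[e,y_i]=x_i$ and $[f,x_i]=y_i$, which tie the nilpotent/injective behavior of $e$ to that of the $x_i$ and of $f$ to that of the $y_i$. By Lemma \ref{lemma6}, each of $e,f,x_i,y_i$ acts either injectively or locally nilpotently on the irreducible module $M$, so it suffices to rule out the ``mixed'' configurations. First I would prove the unconditional direction: suppose $e$ acts locally nilpotently. Fix $i$ and take any $v\in M$; choose $N$ with $e^N v=0$. Using $[e,y_i]=x_i$, I would expand $e^N y_i v$ by moving the $e$'s past $y_i$, obtaining an expression of the form $y_i e^N v + N x_i e^{N-1}v$ plus terms that vanish because $[e,x_i]=0$ forces $x_i$ to commute with all the $e$'s. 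More carefully, since $[e,x_i]=0$ and $[e,y_i]=x_i$, one has $e^N y_i = y_i e^N + N x_i e^{N-1}$, so applying this to a vector killed by a high power of $e$ shows that $x_i$ maps the locally $e$-nilpotent vectors into themselves and acts nilpotently there; because $M$ is irreducible and this subspace is nonzero and $\ms_n$-stable in the relevant sense, $x_i$ is locally nilpotent on $M$. The identical argument with $[f,x_i]=y_i$ and $[f,y_i]=0$ handles the claim that local nilpotence of $f$ forces local nilpotence of each $y_i$.

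For the converse under the hypothesis $\z\neq 0$, I would argue contrapositively: assume $e$ is \emph{not} locally nilpotent, hence injective by Lemma \ref{lemma6}, and show some $x_i$ (indeed all of them) must then be injective. The key leverage is the relation $[x_i,y_i]=z$, which acts as the nonzero scalar $\z$; this is exactly where $\z\neq 0$ enters and where the main obstacle lies. The natural approach is to use the $\sl_2$-triple $\{e,h,f\}$ together with the Heisenberg relations to relate $e$ to a ``sum of squares'' of the $x_i$. Under the differential-operator realization of Lemma \ref{lemma3}, $\tilde e=\tfrac12\sum_k\partial_k^2$ and $\tilde x_i=\sqrt{\z}\,\partial_i$, so morally $e$ is built from the $x_i$; I would make this precise at the level of $U(\ms_n)/\langle z-\z\rangle$ by writing $2\z\, e = \sum_k x_k^2$ (which one checks from the relations, since $e$ and $\tfrac{1}{2\z}\sum_k x_k^2$ have the same brackets with all generators and agree on the center). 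Granting such an identity, injectivity of all $x_k$ would be inconsistent with $e$ being locally nilpotent, giving the contrapositive; conversely if every $x_i$ is locally nilpotent then $\sum_k x_k^2$, and hence $e$, is locally nilpotent.

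The main obstacle I anticipate is justifying the passage between local nilpotence of a single operator and of the summed expression $\sum_k x_k^2$: local nilpotence is not in general preserved under sums, so I cannot simply add. The clean way around this is to work weight-space by weight-space. Since $M$ is a Harish-Chandra module, each $M_\lambda$ is finite dimensional, and $x_i$ raises the $h$-weight by $1$ while $e$ raises it by $2$; an operator that shifts weight and is locally finite on a weight module with finite-dimensional weight spaces is locally nilpotent precisely when it fails to be injective on the full module. Thus I would reduce both directions to statements about injectivity versus having a kernel, where the relation $2\z\,e=\sum_k x_k^2$ can be applied directly: a common kernel vector for all $x_k$ is a kernel vector for $e$, and conversely a kernel vector for $e$ generates, via the $\so_n$-action permuting the indices and the irreducibility, a vector annihilated by some $x_i$. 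Assembling these observations, using Lemma \ref{lemma6} to promote ``non-injective'' to ``locally nilpotent'' at each stage, yields the full equivalence when $\z\neq 0$.
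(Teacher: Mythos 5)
Your proposal has two genuine gaps, one in each direction. The more serious one is the identity $2\z\,e=\sum_k x_k^2$ in $U(\ms_n)/\langle z-\z\rangle$, on which your whole converse direction rests: it is false. It holds only after applying $\theta_{\z}$, i.e.\ inside $\md_n$, and $\theta_{\z}$ is far from injective; under the isomorphism $\phi_{\z}$ of Proposition \ref{proposition3} one has $\phi_{\z}\bigl(2\z e-\sum_k x_k^2\bigr)=2\z\,(e\otimes 1)\neq 0$. Your own proposed verification already fails: $[f,\sum_k x_k^2]=2\sum_k y_kx_k+nz$, whereas $[f,2\z e]=-2\z h$, and these are not equal in $U(\ms_n)/\langle z-\z\rangle$. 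So $e$ is not ``built from the $x_i$''; it carries an extra $\sl_2$-component $e\otimes 1$ invisible to the Heisenberg part, and no algebraic identity can force local nilpotence of $e$ from that of the $x_i$. Indeed, on a module $L\otimes_{\z}\C[t_1,\dots,t_n]$ with $L$ an irreducible $\sl_2$-weight module on which $e$ acts injectively, every $x_i=1\otimes\sqrt{\z}\,\partial_i$ is locally nilpotent while $e=e\otimes 1+1\otimes\tilde e$ is injective; such modules merely fail to be Harish-Chandra, which is exactly the hypothesis your argument never uses in an essential way. The second gap is in the unconditional direction: the identity $e^Ny_i=y_ie^N+Nx_ie^{N-1}$ is correct but proves nothing about nilpotence of $x_i$. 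Since $e$ is assumed locally nilpotent on all of $M$, ``the locally $e$-nilpotent vectors'' is all of $M$, and the assertion that $x_i$ ``acts nilpotently there'' is precisely the statement to be proved; the commutator only shows that $x_i$ and $y_i$ interact mildly with the filtration by $\ker e^N$.

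The paper's proof of both directions is a short support argument, and this is the bridge you are missing. If $e$ is locally nilpotent, then finite-dimensionality of the weight spaces together with $\sl_2$ highest weight theory forces $\supp(M)\cap(\lambda+\Z_+)$ to be finite for every $\lambda$; since $x_i$ raises the $h$-weight by $1$, a power of $x_i$ annihilates each $M_\lambda$, and Lemma \ref{lemma6} upgrades this to local nilpotence on $M$ (note this works for any $\z$, including $\z=0$, where your sum-of-squares idea is unavailable even heuristically). Conversely, if $\z\neq 0$ and $x_i$ is locally nilpotent, highest weight theory for the Heisenberg subalgebra $\C x_i+\C y_i+\C z$ --- using $[x_i,y_i]=\z\neq 0$, so that $y_i$ acts freely on $\ker x_i$ and $M=\C[y_i]\cdot\ker x_i$ --- again bounds $\supp(M)$ above in each $\Z$-coset, and the same weight-shift argument applied to $e$ (which raises the weight by $2$) finishes the proof. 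Your instinct to reduce everything to ``injective versus has a kernel'' via Lemma \ref{lemma6} is sound, but the link between $e$ and the $x_i$ must go through the support of $M$, not through an algebra identity.
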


\begin{proof} Suppose  that $e$ acts locally nilpotently on $M$.
Since all weight  spaces of $M$ are finite dimensional, by the theory of highest weight
modules over $\sl_2$,
for any
$\lambda \in \supp(M)$, there exists a $k\in \Z_+$ such that $\lambda+2l\not \in \supp{M}$ for any $l>k$.
Then from $x_i M_\lambda \subset M_{\lambda+1}$, we have that  $x_i$ acts nilpotently on $M_\lambda$ for
any $i\in\{1,\dots,n\}$.
Then by Lemma $\ref{lemma6}$, $x_i$ acts locally nilpotently on $M$.
Conversely, if $x_i$ acts locally nilpotently on
$M$ and $\z\neq 0$, then similarly by the theory of highest weight
modules over the Heisenberg algebra $\C x_i+\C y_i+\C z$, we can also have that
$e$ acts locally nilpotently on $M$.
\end{proof}

Next we will recall the twisted localization functor from \cite{M}.
Since $\ad f$ acts locally nilpotent on $U(\ms_n)$,  the multiplicative subset $$R= \{ f^i \mid i\in\Z_+\}$$
  is an Ore subset of $U(\ms_n)$,  and hence we have the corresponding Ore
localization $U(\ms_n)_{f}$ with respect to the subset $R$,  see \cite{M}.

For $b\in \C$, there is an isomorphism $\gamma_b$ of $U(\ms_n)_f$ such that
$$\gamma_b(u)=\sum_{j\geq 0}\binom{b}
{j} (\text{ad} f)^{j} (u) f^{-j},\ u\in U(\ms_n)_f .$$

Note that for $b=k\in \Z$, we have $\gamma_b(u)=f^kuf^{-k}$ for any $u\in U(\ms_n)_f $, see \cite{M}.
Next we can check that
\begin{equation}\label{Iso1}   \gamma_b(e)=e+b(1-b-h)f^{-1},
 \end{equation}
\begin{equation}\label{Iso} \gamma_b(x_i)=x_i+by_if^{-1},\end{equation}
for any $1\leq i\leq n$.

For a $U(\ms_n)_f $-module $M$, it can be twisted by $\gamma_b$  to be a new
 $U(\ms_n)_f$-module $M^{\gamma_b}$.  As vector spaces $M^{\gamma_b}= M$.
 For $v\in M^{\gamma_b}$, $x\in \ms_n$,
 $x\cdot v=\gamma_b(x)v $. The modules $M$ and  $M^{\gamma_b}$ are said to be equivalent.

\begin{theorem}\label{p14}
Let $M$ be an irreducible  Harish-Chandra
$\ms_n$-module, on which  $z$ acts as   zero.
 Then $\mh_nM=0$, i.e., $M$ is an irreducible $\so_n\oplus \sl_2$-module. Hence, $M\cong V\otimes L$ for some
 finite dimensional irreducible $\so_n$-module $V$ and some  irreducible weight $\sl_2$-module $L$.
\end{theorem}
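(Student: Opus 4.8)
The plan is to show that the central Heisenberg part $\mh_n$ acts trivially on $M$, after which the result reduces to the structure of irreducible weight modules over the reductive Lie algebra $\so_n\oplus\sl_2$. The natural starting point is Lemma \ref{lemma6}: every generator $x\in\{e,f,x_i,y_i\}$ acts on the irreducible module $M$ either injectively or locally nilpotently. Since $z$ acts as $0$, I would first argue that $e$ cannot act injectively. Indeed, if both $e$ and $f$ acted injectively, then by Lemma \ref{lemma-nil} (in its stated direction) neither could be locally nilpotent; but on a Harish-Chandra module the $\sl_2$-theory forces the $h$-eigenvalues to be bounded above or below along each $2\Z$-coset whenever $e$ or $f$ is locally nilpotent. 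More robustly, I would use the fact that on a weight module with finite-dimensional weight spaces, the $\sl_2$-triple $\{e,h,f\}$ must have either $e$ or $f$ acting locally nilpotently (otherwise one produces an $\sl_2$-subrepresentation with infinitely many equal-dimensional weight spaces growing without bound). Up to the involution of $\ms_n$ swapping $e\leftrightarrow f$ and $x_i\leftrightarrow y_i$, I may assume $e$ acts locally nilpotently.

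Next I would invoke Lemma \ref{lemma-nil}: since $e$ acts locally nilpotently on $M$, so does each $x_i$. The key computation is then to exploit the relation $[x_i,y_i]=z$, which now reads $[x_i,y_i]=0$ on $M$ because $\z=0$, together with $[e,y_i]=x_i$ and $[f,x_i]=y_i$. My strategy is to locate the subspace annihilated by all the $x_i$ and show it is $\ms_n$-stable or generates a proper submodule, contradicting irreducibility unless $\mh_nM=0$. Concretely, following the pattern of the proof of Proposition \ref{GHM}(a), I would take a highest-weight-like vector $v$ (a weight vector killed by all $x_i$, which exists because the $x_i$ are locally nilpotent and commute with each other, hence share a common kernel on each finite-dimensional weight space) and compute that $ey_jv=(x_j+y_je)v$; combined with $x_jv=0$ this drives the $x_i$-action down. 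The aim is to show $U(\ms_n)\sum_j y_j v$ together with the $x_i$-kernel forces all of $\mh_n$ to annihilate a nonzero submodule, whence by irreducibility $\mh_nM=0$.

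Once $\mh_nM=0$ is established, the module $M$ factors through $U(\ms_n)/\langle\mh_n\rangle\cong U(\so_n\oplus\sl_2)$, so $M$ is an irreducible weight module over the reductive Lie algebra $\so_n\oplus\sl_2$. Standard representation theory of a direct sum then gives $M\cong V\otimes L$ with $V$ an irreducible $\so_n$-module and $L$ an irreducible $\sl_2$-module; the Harish-Chandra (finite weight space) hypothesis forces $V$ to be finite dimensional, since the $\so_n$-weight multiplicities must stay finite, and $L$ to be an irreducible weight $\sl_2$-module. I would make the tensor-factorization precise using that $h$ lies in the $\sl_2$-factor, so the $h$-weight spaces of $M$ are the tensor products of the (finite-dimensional, by finite-dimensionality of $V$) $\so_n$-modules with the $h$-weight spaces of $L$.

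The main obstacle I anticipate is the middle step: rigorously deducing $\mh_nM=0$ from local nilpotence of the $x_i$ in the central-charge-zero case. The delicate point is ensuring the candidate submodule built from $\ker_M(x_1,\dots,x_n)$ is genuinely proper and nonzero, and handling the reduction via the $e\leftrightarrow f$ involution so that the locally-nilpotent hypothesis may be assumed without loss of generality. Establishing that a common $x_i$-kernel vector exists and interacts correctly with the $\so_n$-action (so that the generated submodule avoids being all of $M$) is where the real work lies; the reductive-case factorization afterward is routine.
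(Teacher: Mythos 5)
There is a genuine gap, and it sits exactly where the paper has to work hardest. Your reduction rests on the claim that on a weight $\sl_2$-module with finite-dimensional weight spaces, one of $e$, $f$ must act locally nilpotently (``otherwise one produces \dots weight spaces growing without bound''). This is false: the irreducible \emph{dense} (cuspidal) weight $\sl_2$-modules, e.g.\ the modules with support $\lambda+2\Z$, $\lambda\notin\Z$, and all weight spaces one-dimensional, have both $e$ and $f$ acting injectively (indeed bijectively). These modules actually occur in the conclusion of the theorem you are proving --- $L$ in $M\cong V\otimes L$ is allowed to be any irreducible weight $\sl_2$-module --- so the case ``both $e$ and $f$ injective'' is nonempty and cannot be argued away or absorbed by the involution $e\leftrightarrow f$. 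Your proposal therefore only covers the (comparatively easy) highest/lowest weight case, where the paper simply quotes Lemma \ref{lemma-nil} and Proposition \ref{GHM}(a); your kernel-of-the-$x_i$ construction there is a workable variant of that, though more laborious than necessary.

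The missing idea is the paper's treatment of the injective case via the twisted localization $\gamma_b$ on $U(\ms_n)_f$: since $e$ and $f$ act bijectively, $M$ extends to a $U(\ms_n)_f$-module; one picks $b$ with $a+b(1-b-\lambda)=0$ for an eigenvalue $a$ of $ef$ on a weight space, so that $\gamma_b(e)$ kills a vector of an irreducible submodule $N$ of $M^{\gamma_b}$ and hence (Lemma \ref{lemma6}) acts locally nilpotently on $N$; then $N$ is a highest weight module, so $\gamma_b(\mh_n)N=0$ by Proposition \ref{GHM}(a); finally the explicit formulas $\gamma_b(y_i)=y_i$ and $\gamma_b(x_i)=x_i+by_if^{-1}$ let one untwist and conclude $\mh_nN=0$, whence $\{v\in M\mid \mh_n v=0\}$ is a nonzero submodule (it is $\ms_n$-stable because $[\ms_n,\mh_n]\subseteq\mh_n$ and $z$ acts as $0$) and equals $M$ by irreducibility. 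Without some substitute for this step your argument does not establish $\mh_nM=0$ in general. The final factorization $M\cong V\otimes L$ you describe is fine and matches the paper.
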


\begin{proof} If  $e$ acts locally nilpotently on $M$, then by Lemma \ref{lemma-nil}, $M$ is an irreducible
highest weight module. By (a) in Proposition \ref{GHM}, $\mh_n M=0$. In the case that $f$ acts locally nilpotently on $M$,
we also have that $\mh_n M=0$ by the similar arguments.

Next, we suppose that both $e$ and $f$ act injectively on $M$.
Since all weight  spaces of $M$ are finite dimensional, both $e$ and $f$ acts bijectively on $M$.
Then $M$ can be naturally  viewed as a $U(\ms_n)_f$-module. Since $M^{\gamma_b}$ is a uniformly
bounded weight $\ms_n$-module,  $M^{\gamma_b}$ has an irreducible $\ms_n$-submodule $N$. Let $\l\in\supp{N}$.
 Since $\dim N_\lambda <\infty$, there
is a $v\in N_\lambda$ and $a\in\C^*$ such that $ef v=av.$ Choose  $b\in\C$ such that
$a+b(1-b-\lambda)=0$. Then
$$\gamma_b(e)fv=efv+b(1-b-h)v=0.$$
By Lemma \ref{lemma6}, $\gamma_b(e)$ acts locally nilpotently on $N$.
Then $N$ is an irreducible highest weight module. Consequently
$\gamma_b(\mh_n) N=0$. From
$$ \gamma_b(y_i)=y_i, \gamma_b(x_i)=x_i+by_if^{-1}, 1\leq i\leq n,$$
we see that $\mh_n N=0$. So $K=\{ v\in M\mid \mh_n v=0\}$ is a nonzero $\ms_n$-submodule
of $M$. The simplicity of $M$ tells us that $K=M$, i.e., $\mh_n M=0$. 
So $M$ is an irreducible  $\so_n\oplus \sl_2$-module.  Since
$U(\so_n\oplus \sl_2)\cong U(\so_n)\otimes U(\sl_2)$,  there are an irreducible
$\so_n$-module $V$ and an  irreducible weight $\sl_2$-module $L$ such that $M\cong V\otimes L$.
 The condition that each weight space  (with respect to the action of $h$) of $M$ is finite dimensional  forces that
 $\dim V<\infty$.
 The proof is complete.
\end{proof}

\begin{lemma}\label{lemma9}\cite{FDM} Any irreducible weight $\md_1$-module is isomorphic to one of the following
$$t_1^{\lambda_1}\C[t_1^{\pm 1}],\ \lambda_1\not\in\Z, \ \C[t_1],\ \C[t_1^{\pm 1}]/\C[t_1].$$
\end{lemma}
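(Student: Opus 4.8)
The plan is to exploit the $\Z$-grading of $M$ coming from the Euler operator $E=t_1\partial_1$ and to reduce the entire classification to an analysis of $t_1$ and $\partial_1$ near the two exceptional weights $-1$ and $0$. First I would record the basic relations in $\md_1$: since $[t_1\partial_1,t_1]=t_1$ and $[t_1\partial_1,\partial_1]=-\partial_1$, writing $M_\mu=\{v\in M\mid t_1\partial_1 v=\mu v\}$ one has $t_1 M_\mu\subseteq M_{\mu+1}$ and $\partial_1 M_\mu\subseteq M_{\mu-1}$, while on $M_\mu$ the operators act by the scalars $t_1\partial_1=\mu$ and $\partial_1 t_1=\mu+1$. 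As $M$ is irreducible it is generated by any nonzero weight vector, so $\supp(M)$ is contained in a single coset $\lambda+\Z$.

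The first real step is to show that every nonzero weight space is one-dimensional. Fix $0\neq v\in M_\mu$; by irreducibility $M=\md_1 v$. Using the PBW basis $\{t_1^a\partial_1^b\}$ of $\md_1$ together with the fact that $t_1^a\partial_1^b$ raises the weight by $a-b$, the weight-$\mu$ component of $\md_1 v$ is spanned by the vectors $t_1^a\partial_1^a v$. The identity $t_1^a\partial_1^a=(t_1\partial_1)(t_1\partial_1-1)\cdots(t_1\partial_1-a+1)$ then gives $t_1^a\partial_1^a v=\mu(\mu-1)\cdots(\mu-a+1)\,v\in\C v$, so $M_\mu=\C v$.

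With one-dimensional weight spaces, $t_1$ and $\partial_1$ are, on each $M_\mu$, either zero or isomorphisms onto the neighbouring space; from $\partial_1 t_1=\mu+1$ we see $t_1\colon M_\mu\to M_{\mu+1}$ is an isomorphism unless $\mu=-1$, and from $t_1\partial_1=\mu$ that $\partial_1\colon M_\mu\to M_{\mu-1}$ is an isomorphism unless $\mu=0$. Now I split into cases. If $\lambda\notin\Z$, then no weight equals $0$ or $-1$, so $t_1,\partial_1$ are isomorphisms everywhere, $\supp(M)=\lambda+\Z$, and after rescaling the $v_\mu$ so that $t_1 v_\mu=v_{\mu+1}$ the map $v_\mu\mapsto t_1^{\mu}$ gives $M\cong t_1^{\lambda}\C[t_1^{\pm1}]$. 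If $\lambda\in\Z$, I may assume $0\in\lambda+\Z$. Should $0\in\supp(M)$, then $t_1$ carries $M_0$ up through all of $\Z_+$; the subcase $\partial_1|_{M_0}\neq 0$ forces $t_1|_{M_{-1}}=0$ (since $t_1\partial_1=0$ on $M_0$), whence $\bigoplus_{k\leq-1}M_k$ is a proper nonzero submodule, contradicting irreducibility, so $\partial_1|_{M_0}=0$, $\supp(M)=\Z_+$, and $M\cong\C[t_1]$. Symmetrically, if $-1\in\supp(M)$ one obtains $t_1|_{M_{-1}}=0$ and $M\cong\C[t_1^{\pm1}]/\C[t_1]$; since the integral coset always meets $\{0,-1\}$, these exhaust the case $\lambda\in\Z$.

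I expect the computational core to be the one-dimensionality step, where the falling-factorial identity for $t_1^a\partial_1^a$ must be justified (by a short induction on $a$ using $\partial_1 t_1=t_1\partial_1+1$). The conceptual obstacle is the edge analysis in the integral case: one must rule out the two-sided module supported on all of $\Z$, which is exactly the reducible extension $0\to\C[t_1^{\pm1}]/\C[t_1]\to\C[t_1^{\pm1}]\to\C[t_1]\to 0$, and recognize that irreducibility selects precisely the one-sided quotients $\C[t_1]$ and $\C[t_1^{\pm1}]/\C[t_1]$. Everything else is routine bookkeeping with one-dimensional weight spaces.
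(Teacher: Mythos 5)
Your argument is correct, but there is nothing in the paper to compare it against: the paper does not prove this lemma at all, it simply cites the classification of weight modules over Weyl algebras from [FDM] (Futorny--Grantcharov--Mazorchuk). Your proposal therefore supplies a genuinely self-contained elementary proof where the paper has only a reference. The skeleton is sound: the weight shift $t_1M_\mu\subseteq M_{\mu+1}$, $\partial_1M_\mu\subseteq M_{\mu-1}$; the falling-factorial identity $t_1^a\partial_1^a=(t_1\partial_1)(t_1\partial_1-1)\cdots(t_1\partial_1-a+1)$ forcing one-dimensional weight spaces; and the observation that $t_1\colon M_\mu\to M_{\mu+1}$ (resp. $\partial_1\colon M_\mu\to M_{\mu-1}$) can only fail to be an isomorphism at $\mu=-1$ (resp. $\mu=0$), which localizes all the case analysis at the two exceptional weights. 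The edge analysis in the integral case correctly identifies the two one-sided irreducibles and rules out a simple module supported on all of $\Z$.

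One small imprecision worth fixing: you close the integral case by saying ``the integral coset always meets $\{0,-1\}$,'' which is trivially true of $\Z$ but is not the statement you need; you need that $\supp(M)$ meets $\{0,-1\}$. This does follow from what you have already established --- if $\mu\in\supp(M)$ with $\mu\geq 1$ then $\partial_1$ is an isomorphism at every step from $M_\mu$ down to $M_0$, and if $\mu\leq -2$ then $t_1$ is an isomorphism at every step up to $M_{-1}$ --- but you should say so explicitly, since otherwise a reader could imagine a support such as $\{5,6,7,\dots\}$ avoiding both exceptional weights. With that sentence added, the proof is complete.
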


\begin{theorem}\label{th9}
Let $M$ be an irreducible Harish-Chandra
$\ms_n$-module with a nonzero central charge, say $\dot{z}\in \C^*$.
\begin{enumerate}[$($a$)$]
\item If $e$ acts locally nilpotently on $M$, then
$M$ is an irreducible  highest weight module. Explicitly
$$M \cong V\otimes_{\z} L_{\sl_2}(\lambda) \otimes _{\dot z} \C[t_1,\dots,t_n]$$ for some finite dimensional  irreducible
$\so_{n}$-module $V$.
\item If $f$ acts locally nilpotently on $M$, then $M$ is an irreducible lowest weight module. Explicitly
$$M^\tau \cong V\otimes_{\z} L_{\sl_2}(\lambda) \otimes _{\dot z} \C[t_1,\dots,t_n]$$ for some finite dimensional  irreducible
$\so_{n}$-module $V$, where $\tau$ is an involution of $\ms_n$ such that $\tau(e)=-f, \tau(h)=-h, \tau(x_i)=-y_i,\tau(s_{ij})=s_{ij}$, for $ 1\leq i,j\leq n$.
\item  If both $e$ and $f$ act injectively on $M$, then $n=1$ and
\begin{equation}\label{3.5}M\cong L_{\sl_2}(k)\otimes_{\z} t_1^\lambda\C[t_1,t_1^{-1}] ,\end{equation}
where $k\in\Z_+, \lambda\not\in\Z$.
\end{enumerate}
\end{theorem}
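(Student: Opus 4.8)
The plan is to separate cases using Lemma~\ref{lemma6}: each of $e$ and $f$ acts on $M$ either injectively or locally nilpotently. If both were locally nilpotent, then by Lemma~\ref{lemma-nil} so would $x_1$ and $y_1$ be, which is impossible on a nonzero module because $[x_1,y_1]=z$ acts as $\z\neq0$; thus precisely the three situations (a), (b), (c) arise. For (a), suppose $e$ is locally nilpotent. By Lemma~\ref{lemma-nil} every $x_i$ is then locally nilpotent, and since $\ms_n^+=\span_\C\{e,x_1,\dots,x_n\}$ is abelian these commuting locally nilpotent operators have a nonzero common kernel $K=\{v\in M\mid \ms_n^+v=0\}$. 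As $\ms_n^0$ normalizes $\ms_n^+$, the space $K$ is a nonzero $\ms_n^0$-module, so it contains a weight vector lying in an irreducible $\so_n$-submodule; this is a highest weight vector, whence $M$ is a highest weight module $L(V,\lambda,\z)$. Proposition~\ref{GHM}(b) then gives the asserted tensor form, and finiteness of the weight spaces forces $\dim V<\infty$.

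Part (b) reduces to (a) through the involution $\tau$: on $M^\tau$ the element $e$ acts as $\tau(e)=-f$, hence locally nilpotently since $f$ is locally nilpotent on $M$, while $M^\tau$ is again irreducible and Harish-Chandra with nonzero central charge. Applying (a) to $M^\tau$ yields the stated description.

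The content is in (c), where $e,f$ and therefore (by the converse in Lemma~\ref{lemma-nil}, valid since $\z\neq0$) all $x_i,y_i$ act injectively. First I would show $n=1$. Suppose $n\geq2$ and fix $i\neq j$. A direct bracket computation shows that $y_ix_j,\ y_jx_i,\ y_ix_i-y_jx_j$ span a subalgebra of $U(\ms_n)$ isomorphic to $\sl_2$, with $y_ix_j$ a root vector; moreover all three commute with $h$, hence preserve every finite-dimensional weight space $M_\mu$. On such a finite-dimensional $\sl_2$-module the root vector $y_ix_j$ must act nilpotently, yet $y_ix_j$ is a composite of the injective operators $y_i$ and $x_j$, so it is injective and hence bijective on $M_\mu$. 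Therefore $M_\mu=0$ for every $\mu$, a contradiction; so $n=1$ (and $\so_1=0$, so no orthogonal factor will occur).

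For $n=1$ I would use $\phi_{\z}$ of Proposition~\ref{proposition3} to view $M$ as an irreducible module over $U(\sl_2)\otimes\md_1$. The main obstacle is that the Weyl-algebra grading operator $t_1\partial_1$ does not lie in $\ms_1$, so it is not a priori semisimple; proving that it is, i.e. that $M$ is a weight $\md_1$-module, is the crux. For this, note that the image $h^{(1)}:=h+t_1\partial_1+\tfrac12$ of $h\otimes1$ commutes with $h$, hence preserves each finite-dimensional $M_\mu$ and acts locally finitely; let $h^{(1)}=H_s+H_n$ be its Jordan decomposition. Since $x_1,y_1$ commute with $h^{(1)}$ and are injective, transporting the decomposition gives $[H_n,x_1]=[H_n,y_1]=0$, while the relations $[h^{(1)},e^{(1)}]=2e^{(1)}$, $[h^{(1)},f^{(1)}]=-2f^{(1)}$ (with $e^{(1)},f^{(1)}$ the images of $e\otimes1,f\otimes1$) force $H_s$ to shift these root vectors by $\pm2$, whence $[H_n,e^{(1)}]=[H_n,f^{(1)}]=0$; thus $H_n$ commutes with all of $\ms_1$. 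By Schur's lemma $H_n$ is a nilpotent scalar, so $H_n=0$ and $t_1\partial_1$ is semisimple. Then $M\cong N\otimes_{\z}L$ for an irreducible weight $\sl_2$-module $N$ and an irreducible weight $\md_1$-module $L$; the injectivity of $x_1=\sqrt{\z}\,\partial_1$ and $y_1=\sqrt{\z}\,t_1$ together with Lemma~\ref{lemma9} force $L\cong t_1^{\lambda}\C[t_1^{\pm1}]$ with $\lambda\notin\Z$. Finally $\tilde h=-t_1\partial_1-\tfrac12$ has one-dimensional weight spaces on $L$, supported on a single coset of $\Z$, so $\dim M_\mu=\dim N$ on that coset; Harish-Chandra finiteness then forces $\dim N<\infty$, i.e. $N\cong L_{\sl_2}(k)$ with $k\in\Z_+$, giving $M\cong L_{\sl_2}(k)\otimes_{\z}t_1^{\lambda}\C[t_1^{\pm1}]$.
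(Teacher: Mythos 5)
Your proof is correct, and for parts (a) and (b) it follows the paper's own route (Lemma~\ref{lemma-nil} plus Proposition~\ref{GHM}(b), and the twist by $\tau$), merely filling in the common-kernel argument for the abelian algebra $\ms_n^+$ that the paper leaves implicit when it asserts $M$ is a highest weight module. Part (c) is where you genuinely diverge, in two places. To rule out $n\geq 2$ the paper picks a common eigenvector $v$ of the $t_i\partial_i$ inside a weight space and observes that $\md_n v$ contains infinitely many independent vectors of a single $h$-weight (e.g.\ the $(t_i\partial_j)^k v$, which have distinct $t_i\partial_i$-eigenvalues), contradicting the Harish-Chandra condition; you instead exhibit the triple $y_ix_j,\ y_jx_i,\ y_ix_i-y_jx_j$ (an $\sl_2$-triple after dividing by $\z$, since $[y_ix_j,y_jx_i]=\z(y_ix_i-y_jx_j)$ and $[y_ix_i-y_jx_j,y_ix_j]=2\z\, y_ix_j$), which preserves each finite-dimensional $M_\mu$ and on which the root vector $y_ix_j$ would have to be simultaneously nilpotent and bijective. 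Both arguments work; yours avoids analyzing the weight structure of $\md_n v$, while the paper's is shorter to state. More significantly, for $n=1$ the paper simply says the isomorphism (\ref{3.5}) ``follows from'' $U(\ms_1)/\langle z-\z\rangle\cong U(\sl_2)\otimes\md_1$ and Lemma~\ref{lemma9} --- but Lemma~\ref{lemma9} classifies \emph{weight} $\md_1$-modules, and it is not a priori clear that $t_1\partial_1$ (which does not lie in $\ms_1$) acts semisimply on $M$. Your Jordan-decomposition argument --- showing the nilpotent part of $h\otimes 1$ commutes with the image of all of $\ms_1$ and hence vanishes by Schur's lemma --- closes exactly this gap and is the most valuable addition. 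The one remaining step you state without proof, that an irreducible module over $U(\sl_2)\otimes\md_1$ containing an irreducible weight $\md_1$-submodule is an outer tensor product, is standard and is asserted with no more detail in the paper itself.
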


\begin{proof} (a). By Lemma \ref{lemma-nil}, $M$ is an irreducible  highest weight module. Then (a) follows from
(b) in Proposition \ref{GHM}.

(b). It follows similarly.

(c).  Via the isomorphism $\phi_{\z}$ in Proposition \ref{proposition3}, we can view $M$ as a module over
$U(\so_n\oplus \sl_2)\otimes \md_n$.
 By Lemma \ref{lemma-nil}, $x_i, y_i$ act injectively on $M$, hence $t_i,\partial_i$ act injectively on $M$. Choose a weight vector $v\in M_\lambda$ which is a
 common eigenvector of $t_i\partial_i$, $i\in\{1,\dots,n\}$. If $n>1$, then $\md_n v=\C[t_1^{\pm 1},\dots, t_n^{\pm 1}]$ as vector spaces.
 Note that $1\otimes \tilde{h}$ acts on  $\C[t_1^{\pm 1},\dots, t_n^{\pm 1}]$ by $-\frac{1}{2} \sum_{k=1}^n (\partial_kt_k+t_k\partial_k)$.
  Consequently $M$ has infinite dimensional weight spaces, a contradiction. Hence $n=1$. When $n=1$, the module isomorphism (\ref{3.5}) follows from the algebra
 isomorphism $U(\ms_1)/\langle z-\z \rangle\cong U(\sl_2)\otimes \md_1$ and Lemma \ref{lemma9}.

\end{proof}

Note that in the case $n=1$, the classification was given in \cite{D} by a different method.

\begin{center}
\bf Acknowledgments
\end{center}

\noindent G.L. is partially supported by NSF of China (Grant
11771122).

\vspace{0.2cm}

\noindent G. Liu: School
of Mathematics and Statistics, and  Institute of Contemporary Mathematics, Henan University, Kaifeng 475004, P.R. China. Email:
liugenqiangbnu@126.com
\vspace{0.2cm}

\noindent Y. Li: School
of Mathematics and Statistics, Henan University, Kaifeng 475004, P.R. China. Email:
897981524@qq.com
\vspace{0.2cm}

\noindent K. Wang: School
of Mathematics and Statistics, Henan University, Kaifeng 475004, P.R. China. Email:
2832094678@qq.com

\end{document}